\documentclass[11pt,leqno]{amsart}
\usepackage{amsmath,amsthm,amssymb}
\usepackage{tabularx}
\usepackage{comment}
\newcommand{\GF}{{\mathbb F}}
\newcommand{\FF}{{\mathbb F}}
\newcommand{\F}{{\mathbb F}}

\newcommand{\R}{{\mathbb R}}
\newcommand{\RR}{{\mathbb R}}

\newcommand{\NN}{{\mathbb N}}

\DeclareMathOperator{\Hom}{Hom}
\DeclareMathOperator{\kernel}{Ker}

\newcommand{\wt}{{\rm wt}}
\newcommand{\supp}{{\rm supp}}

\newcommand{\allone}{\mathbf{1}}

\DeclareMathOperator{\Harm}{Harm}

\newcommand{\nexteq}{\displaybreak[0]\\ &=}

\usepackage{color} 
\usepackage{ulem}

\newtheorem{thm}{Theorem}[section]
\newtheorem{lem}[thm]{Lemma}
\newtheorem{cor}[thm]{Corollary}

\theoremstyle{definition}
\newtheorem{df}[thm]{Definition}

\numberwithin{equation}{section}

\title[Jacobi polynomials and harmonic weight enumerators]
{Jacobi polynomials and harmonic weight enumerators of 
the first-order Reed--Muller codes and the extended Hamming codes}

\author{Tsuyoshi Miezaki*}
\address{		Faculty of Science and Engineering, 
		Waseda University, 
		Tokyo 169--8555, Japan
}
\email{miezaki@waseda.jp} 
\author{Akihiro Munemasa}
\address{Graduate School of Information Sciences, Tohoku University, Sendai
980--8579, Japan}
\email{munemasa@math.is.tohoku.ac.jp}


\keywords{Reed--Muller code, extended Hamming code, 
combinatorial $t$-design, 
Jacobi polynomial, harmonic weight enumerator
}

\subjclass[2010]{Primary 94B05; Secondary 05B05}

\begin{document}
\begin{abstract}
In the present paper, 
we give harmonic weight enumerators and Jacobi polynomials for 
the first-order Reed--Muller codes and the extended Hamming codes. 
As a corollary, 
we show the nonexistence of combinatorial $4$-designs in these codes. 
\end{abstract}
\maketitle


\section{Introduction}

Let $m$ be a positive integer, and set $V=\F_2^m$.
The first-order Reed--Muller code $RM(1,m)$ is defined as
the subspace of $\F_2^V$ consisting of affine linear functions:
\[RM(1,m)=\{(\lambda(x)+b)_{x\in V}\mid \lambda\in V^*,\;b\in\F_2\},\]
where $V^*=\Hom(V,\F_2)$. 
We remark that 
the weight enumerator of $RM(1,m)$ is 
\[
x^{2^m}+(2^{m+1}-2)x^{2^{m-1}}y^{2^{m-1}}+y^{2^m}.
\]
It is well known that 
the dual code of $RM(1,m)$ is 
isomorphic to an extended Hamming code 
$H_{2^m}$ \cite{MS}. 

Let $C=RM(1,m)$ or $H_{2^m}$, and 
$C_\ell:=\{c\in C\mid \wt(c)=\ell\}$. 
In this paper, we call $C_\ell$ a shell of the code $C$ 
whenever it is non-empty.
Shells of $RM(1,m)$ and $H_{2^m}$ are known to 
support combinatorial $3$-designs by 
the Assmus--Mattson theorem (see Theorem \ref{thm:assmus-mattson}) 
or the transitivity argument (see \cite[Ch.~13.~\S 9]{MS}). 
More precisely, 
the set $\mathcal{B}(C_\ell):=\{\supp(x)\mid x\in C_\ell\}$
forms the set of blocks of a combinatorial $3$-design. 

In \cite{MN-TEC}, H.~Nakasora and the first named author gave 
the first nontrivial examples of a code 
that supports combinatorial $t$-designs for all weights 
obtained from the Assmus--Mattson theorem and 
that supports $t'$-designs for some weights with some $t'>t$ 
(see also \cite{{BS},{Dillon-Schatz},{MMN},{mn-typeI}}). 
Hence, it is natural to ask whether 
certain shells of $RM(1,m)$ and $H_{2^m}$ support combinatorial $4$-designs. The aim of the present paper is to settle this problem by 
computing the Jacobi polynomials and harmonic weight enumerators for 
the first-order Reed--Muller codes and the extended Hamming codes. 
The definitions of 
the Jacobi polynomials and harmonic weight enumerators 
will be given in Section \ref{sec:pre}. 

\begin{thm}\label{thm:Jacobi}
Let $C=RM(1,m)$ and
$T=\{0,u_1,u_2,u_3\}\in\binom{V}{4}$.
\begin{enumerate}
\item\label{thm:1.1-1}
\begin{enumerate}
\item
If $u_1+u_2\neq u_3$, then
\begin{align*}
J_{C,T}(w,z,x,y)=&w^4x^{2^m-4} + (2^{m-3}-1)w^4x^{2^{m-1}-4}y^{2^{m-1}} \\
&+ 2^{m-1}w^3 z x^{2^{m-1}-3} y^{2^{m-1}-1} 
+ 3\cdot 2^{m-2}w^2z^2x^{2^{m-1}-2}y^{2^{m-1}-2}\\
&+ 2^{m-1}wz^3x^{2^{m-1}-1}y^{2^{m-1}-3}
+ (2^{m-3}-1)z^4x^{2^{m-1}}y^{2^{m-1}-4}\\
&+ z^4y^{2^m-4}. 
\end{align*}
\item
If $u_1+u_2= u_3$, then
\begin{align*}
J_{C,T}(w,z,x,y)=&w^4x^{2^{m}-4} 
+ (2^{m-2}-1)w^4x^{2^{m-1}-4}y^{2^{m-1}} \\
&+ 3\cdot 2^{m-1}w^2z^2x^{2^{m-1}-2}y^{2^{m-1}-2} \\
&+ (2^{m-2}-1)z^4x^{2^{m-1}}y^{2^{m-1}-4} 
+ z^4y^{2^{m}-4}. 
\end{align*}

\end{enumerate}
\item\label{thm:1.1-2}
\begin{enumerate}
\item
If $u_1+u_2\neq u_3$, then
\begin{align*}
J_{C^\perp,T}(w,z,x,y)=&\frac{1}{2^{m+1}}\Big((w+z)^4(x+y)^{2^m-4}\\
&+ (2^{m-3}-1)(w+z)^4(x+y)^{2^{m-1}-4}(x-y)^{2^{m-1}} \\
&+ 2^{m-1}(w+z)^3 (w-z) (x+y)^{2^{m-1}-3} (x-y)^{2^{m-1}-1} \\
&+ 3\cdot 2^{m-2}(w+z)^2(w-z)^2(x+y)^{2^{m-1}-2}(x-y)^{2^{m-1}-2}\\
&+ 2^{m-1}(w+z)(w-z)^3(x+y)^{2^{m-1}-1}y^{2^{m-1}-3}\\
&+ (2^{m-3}-1)(w-z)^4(x+y)^{2^{m-1}}(x-y)^{2^{m-1}-4} \\
&+ (w-z)^4(x-y)^{2^m-4}\Big). 
\end{align*}
\item
If $u_1+u_2= u_3$, then
\begin{align*}
J_{C^\perp,T}(w,z,x,y)=&\frac{1}{2^{m+1}}\Big((w+z)^4(x+y)^{2^{m}-4} \\
&+ (2^{m-2}-1)(w+z)^4(x+y)^{2^{m-1}-4}(x-y)^{2^{m-1}} \\
&+ 3\cdot 2^{m-1}(w+z)^2(w-z)^2(x+y)^{2^{m-1}-2}(x-y)^{2^{m-1}-2} \\
&+ (2^{m-2}-1)(w-z)^4(x+y)^{2^{m-1}}(x-y)^{2^{m-1}-4} \\
&+ (w-z)^4(x-y)^{2^{m}-4}\Big). 
\end{align*}
\end{enumerate}
\end{enumerate}
\end{thm}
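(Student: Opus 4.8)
The plan is to compute $J_{C,T}$ directly from the definition and then pass to the dual code via the MacWilliams identity for Jacobi polynomials. Recall that, writing $m_i(c)=|\{v\in T\mid c_v=i\}|$ and $n_i(c)=|\{v\in V\setminus T\mid c_v=i\}|$ for $c\in C$ and $i\in\F_2$,
\[
J_{C,T}(w,z,x,y)=\sum_{c\in C}w^{m_0(c)}z^{m_1(c)}x^{n_0(c)}y^{n_1(c)}.
\]
Since every translation $x\mapsto x+a$ with $a\in V$ lies in $\Aut(RM(1,m))$, the polynomial $J_{C,T}$ is unchanged if $T$ is translated, so the hypothesis $0\in T$ costs nothing. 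Every $c\in RM(1,m)$ is of the form $c_{\lambda,b}=(\lambda(x)+b)_{x\in V}$ with $\lambda\in V^*$ and $b\in\F_2$; when $\lambda=0$ we get $\mathbf 0$ and $\mathbf 1$, contributing the terms $w^4x^{2^m-4}$ and $z^4y^{2^m-4}$ that appear in all four formulas.

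Next I would analyze the codewords with $\lambda\neq 0$, which all have weight $2^{m-1}$. Since $c_{\lambda,b}(0)=b$ and $c_{\lambda,b}(u_i)=\lambda(u_i)+b$, setting $k=k(\lambda):=|\{i\in\{1,2,3\}\mid\lambda(u_i)=1\}|$ gives $(m_0,m_1)=(4-k,k)$ if $b=0$ and $(m_0,m_1)=(k,4-k)$ if $b=1$; moreover $n_1=\wt(c_{\lambda,b})-m_1=2^{m-1}-m_1$ and $n_0=2^{m-1}-m_0$, so $c_{\lambda,b}$ contributes $w^{m_0}z^{m_1}x^{2^{m-1}-m_0}y^{2^{m-1}-m_1}$. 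Hence everything reduces to computing $N_k:=|\{\lambda\in V^*\setminus\{0\}\mid k(\lambda)=k\}|$ for $k=0,1,2,3$. This is where the two cases separate. Consider the linear map $\phi\colon V^*\to\F_2^3$, $\phi(\lambda)=(\lambda(u_1),\lambda(u_2),\lambda(u_3))$; its kernel is the annihilator of $\mathrm{span}_{\F_2}\{u_1,u_2,u_3\}$, so the image of $\phi$ has dimension $\dim\mathrm{span}_{\F_2}\{u_1,u_2,u_3\}$ and each nonempty fiber is a coset of $\ker\phi$. Over $\F_2$ the only possible linear relation among the distinct nonzero vectors $u_1,u_2,u_3$ is $u_1+u_2+u_3=0$, i.e. $u_1+u_2=u_3$. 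In case (a) the $u_i$ are linearly independent, so $\phi$ is surjective, each fiber has $2^{m-3}$ elements, and counting preimages of the weight-$k$ vectors in $\F_2^3$ and discarding $\lambda=0$ gives $(N_0,N_1,N_2,N_3)=(2^{m-3}-1,\,3\cdot 2^{m-3},\,3\cdot 2^{m-3},\,2^{m-3})$. In case (b) the image of $\phi$ equals $\{(a,a',a+a')\mid a,a'\in\F_2\}$, whose nonzero elements all have Hamming weight $2$, each fiber has $2^{m-2}$ elements, so $N_0=2^{m-2}-1$, $N_2=3\cdot 2^{m-2}$, and $N_1=N_3=0$.

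Summing the monomial $w^{m_0}z^{m_1}x^{2^{m-1}-m_0}y^{2^{m-1}-m_1}$ over $\lambda\neq 0$ and $b\in\F_2$ — for each fixed $k$ the values $b=0,1$ produce the exponent pairs $(4-k,k)$ and $(k,4-k)$ — and adding the $\lambda=0$ terms, one recovers the asserted polynomials. In case (b) the only pattern besides the constant codewords is $w^2z^2x^{\cdots}y^{\cdots}$, with total coefficient $2N_2=3\cdot 2^{m-1}$, and the $w^4$- and $z^4$-terms get coefficient $N_0=2^{m-2}-1$. In case (a) the patterns $w^3z\,x^{\cdots}y^{\cdots}$ and $wz^3x^{\cdots}y^{\cdots}$ each receive a contribution from $k=1$ and from $k=3$, giving coefficient $3\cdot 2^{m-3}+2^{m-3}=2^{m-1}$, the $w^4$- and $z^4$-terms get coefficient $N_0=2^{m-3}-1$, and the $w^2z^2$-term gets $2N_2=3\cdot 2^{m-2}$. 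This proves part (1). For part (2), I would invoke the MacWilliams identity for Jacobi polynomials,
\[
J_{C^\perp,T}(w,z,x,y)=\frac{1}{|C|}\,J_{C,T}(w+z,\,w-z,\,x+y,\,x-y),
\]
and substitute the formulas just obtained; since $|C|=|RM(1,m)|=2^{m+1}$ this yields the stated expressions for $J_{C^\perp,T}$.

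The computations above are elementary, and I expect the only genuinely substantive point to be the rank dichotomy for $\phi$ in the middle step — namely recognizing that the sole linear dependency among three distinct nonzero binary vectors is $u_1+u_2+u_3=0$, which is exactly the condition separating cases (a) and (b). Everything else — expanding the two fiber counts into the eight monomials indexed by $(k,b)$, collecting like terms, and performing the binomial substitution in part (2) — is routine bookkeeping, so the main obstacle is organizational: keeping those eight monomials correctly paired and weighted.
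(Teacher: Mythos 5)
Your proposal is correct and follows essentially the same route as the paper: both reduce the computation to the distribution of $\wt(c|_T)$ over the nonconstant codewords (all of weight $2^{m-1}$), distinguish the two cases by whether $u_1,u_2,u_3$ satisfy the unique possible dependency $u_1+u_2+u_3=0$, and obtain part (2) from part (1) via the MacWilliams identity for Jacobi polynomials. The only cosmetic difference is that you count via the fibers of the evaluation map $\lambda\mapsto(\lambda(u_1),\lambda(u_2),\lambda(u_3))$, whereas the paper's Lemma~\ref{lem:1} does the equivalent count by inclusion--exclusion on the hyperplanes $u_i^\perp$.
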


\begin{thm}\label{thm:main1}
Let $C=RM(1,m)$ and $U\subset V$ be a three-dimensional subspace of $V$. 
We assume that 
$f$ is a harmonic function of degree $k\in \NN$ such that 
$f=0$ on $\binom{V}{k}\setminus \binom{U}{k}$. 
Then we have 
\begin{align*}
w_{C,f}(x,y)=&2^{m-3}x^{2^{m-1}}y^{2^{m-1}}\sum_{a\in H_8, \wt(a)=4}\widetilde{f}(a),\\
w_{C^\perp,f}(x,y)=&(-1)^k(xy)^k {2^{2^{m-1}-2}}\\
&\left(\frac{x+y}{\sqrt{2}}\right)^{2^{m-1}-k}\left(\frac{x-y}{\sqrt{2}}\right)^{2^{m-1}-k}
\sum_{a\in H_8, \wt(a)=4}\widetilde{f}(a). 
\end{align*}
\end{thm}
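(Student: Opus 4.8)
The plan is to derive both harmonic weight enumerators from the Jacobi polynomial data of Theorem~\ref{thm:Jacobi} together with the structural observation that a harmonic function $f$ of degree $k$ supported on $\binom{U}{k}$ for a fixed $3$-dimensional subspace $U$ depends, through the relevant code, only on how $f$ ``sees'' the $4$-subsets of $U$. First I would fix an isomorphism identifying $U$ with $\F_2^3$ and the restriction $RM(1,m)|_U$ with the extended Hamming code $H_8$ on coordinate set $U$; the key point is that the weight distribution of a codeword $c\in RM(1,m)$ is governed by its restriction $c|_U$ exactly through the case distinction in Theorem~\ref{thm:Jacobi} (whether $u_1+u_2=u_3$, i.e.\ whether $T\subset U$ spans a subspace). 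I would then recall the defining identity
\[
w_{C,f}(x,y)=\sum_{c\in C} f\bigl(\supp(c)\bigr)\,x^{n-\wt(c)}y^{\wt(c)},
\]
and, using that $f$ vanishes off $\binom{U}{k}$, rewrite the sum over $c\in C$ as a sum indexed by the restriction $c|_U$, grouping codewords by $\supp(c)\cap U$.

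The main computational step is the evaluation of $\sum_{c\,:\,\supp(c)\cap U = S} f(\supp(c))$ for each $S\subseteq U$. Since $f$ is harmonic of degree $k$, it is a linear combination of the ``monomial'' harmonic functions, and the zonal-type averaging over all $4$-subsets (equivalently over $T\in\binom{V}{4}$ with $0\in T$, after a translation argument) collapses to the discrete Fourier/Krawtchouk transform $\widetilde f$ evaluated on the weight-$4$ words of $H_8$; this is where the combination $\sum_{a\in H_8,\ \wt(a)=4}\widetilde f(a)$ enters. Concretely, I would write $w_{C,f}$ as a ``harmonic'' analogue of the Jacobi polynomial: substitute $w\to y$-type and $z\to x$-type specializations into $J_{C,T}$ and then contract against the coefficients of $f$. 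Because $f$ has degree $k\ge 1$, the pure terms $w^4 x^{2^m-4}$ and $z^4 y^{2^m-4}$ (coming from the zero and all-one codewords, whose supports meet $U$ in $\emptyset$ or all of $U$) contribute nothing, and similarly the terms linear or cubic in the ``selected-coordinate'' variable cancel by the harmonicity (degree-$k$) conditions; only the symmetric middle terms $3\cdot 2^{m-2}w^2z^2(\cdots)$ and $3\cdot 2^{m-1}w^2z^2(\cdots)$ survive, and they combine with the count of weight-$4$ words of $H_8$ to produce the single monomial $2^{m-3}x^{2^{m-1}}y^{2^{m-1}}\sum_{a}\widetilde f(a)$.

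For the second formula I would invoke the harmonic MacWilliams identity (the transform law relating $w_{C,f}$ and $w_{C^\perp,f}$ for $f$ harmonic of degree $k$): if
\[
w_{C,f}(x,y)=(xy)^k Z_{C,f}(x,y),
\]
then
\[
w_{C^\perp,f}(x,y)=\frac{(-1)^k}{|C|}\,(xy)^k\,\left(\frac{x+y}{\sqrt 2}\right)^{n-2k}\!\left(\frac{x-y}{\sqrt 2}\right)^{\!?}\,Z_{C,f}\!\left(\frac{x+y}{\sqrt 2},\frac{x-y}{\sqrt 2}\right),
\]
so applying this to the monomial just obtained for $C=RM(1,m)$ (with $n=2^m$, $|C|=2^{m+1}$, and $Z_{C,f}$ a constant times $x^{2^{m-1}-k}y^{2^{m-1}-k}$) yields the stated expression for $w_{C^\perp,f}$ after simplifying the power of $2$: $2^{2^{m-1}-2}=\tfrac{1}{2^{m+1}}\cdot 2^{2^{m-1}}\cdot 2^{m-3}$. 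The hard part will be the bookkeeping in the second step—verifying that all but the quadratic-in-$z$ terms of $J_{C,T}$ are annihilated after contracting with a degree-$k$ harmonic $f$, in both the ``$u_1+u_2\ne u_3$'' and ``$u_1+u_2=u_3$'' cases, and checking that the two cases contribute consistently so that the answer depends only on $\sum_{a\in H_8,\wt(a)=4}\widetilde f(a)$ and not on finer data of $f$; this requires a careful interchange between the harmonicity relations on $f$ and the two weight patterns of restrictions $c|_U$ in $H_8$.
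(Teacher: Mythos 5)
Your overall strategy diverges from the paper's and has genuine gaps. The paper does not route through the size-$4$ Jacobi polynomials of Theorem~\ref{thm:Jacobi} at all: since $U$ has $8$ elements and $f$ is supported on $\binom{U}{k}$, the value $\widetilde{f}(\supp(c))$ depends on the full intersection $\supp(c)\cap U$, which is not determined by the data that $J_{C,T}$ with $|T|=4$ records. What is actually needed is the restriction map $c\mapsto c|_U$ to the whole $8$-set: the paper's Lemma~\ref{lem:numwords} shows $\{c|_U\mid c\in C\}=H_8$ and, crucially, that every $a\in H_8$ has exactly $2^{m-3}$ preimages in $C_{2^{m-1}}$. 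This uniform fiber count is what produces the constant $2^{m-3}$ and is the reason the answer depends only on $\sum_{a}\widetilde{f}(a)$ rather than on finer data of $f$; your proposal never establishes it, and it cannot be extracted from Theorem~\ref{thm:Jacobi}.

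Second, your cancellation mechanism is misdiagnosed. There are no ``linear or cubic'' terms to cancel by harmonicity: $H_8$ has codewords of weights $0,4,8$ only, so $\wt(c|_U)$ is never odd. What does require an argument is the vanishing of the contributions from codewords with $\supp(c)\cap U=\emptyset$ or $\supp(c)\supseteq U$ --- the latter class contains not only the all-one word but also $2^{m-3}$ codewords of weight $2^{m-1}$ (those with $c|_U=\allone_U$), each contributing $\widetilde{f}(U)$. The paper kills these by proving $\widetilde{f}(\emptyset)=0$ (since $k\ge 1$) and $\widetilde{f}(U)=0$ (from $k\widetilde{f}(U)=\sum_{y\in\binom{U}{k-1}}(\gamma f)(y)=0$); your proposal asserts the vanishing only for the zero and all-one words, without justification, and overlooks the weight-$2^{m-1}$ words whose support contains $U$. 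Finally, your arithmetic check of the constant in the dual formula, $2^{2^{m-1}-2}=\tfrac{1}{2^{m+1}}\cdot 2^{2^{m-1}}\cdot 2^{m-3}$, is false as written: the right-hand side equals $2^{2^{m-1}-4}$, which is what Theorem~\ref{thm: Bachoc iden.} actually yields and what is consistent with the expression $8(xy)^4(x^2-y^2)^{2^{m-1}-4}$ used in the proof of Corollary~\ref{cor:main}; so this step needs to be redone rather than asserted.
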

We show, as a corollary, the nonexistence of combinatorial $4$-designs in these codes. 
\begin{cor}\label{cor:main}
Let $C=RM(1,m)$ or $H_{2^m}$. 
Then for any $\ell\in \NN$, 
$C_{\ell}$ is not a combinatorial $4$-design. 
\end{cor}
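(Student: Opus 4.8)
The plan is to reduce the claim, for both $C=RM(1,m)$ and $C=H_{2^m}$, to the vanishing of a harmonic weight enumerator that Theorem \ref{thm:main1} lets us compute explicitly. Recall the standard design criterion via harmonic weight enumerators (Bachoc's theorem): a shell $C_\ell$ supports a combinatorial $t$-design if and only if, for every harmonic function $f$ of degree $k$ with $1\le k\le t$, the coefficient of $x^{2^m-\ell}y^\ell$ in $w_{C,f}(x,y)$ vanishes; equivalently $w_{C,f}$ vanishes identically when tested against all such $f$. So to show no shell is a $4$-design it suffices to exhibit, for each code $C$ in question, a single harmonic function $f$ of some degree $k\in\{1,2,3,4\}$ for which the relevant coefficient of $w_{C,f}$ is nonzero on at least one nonempty shell. (If a shell already fails to be a $3$-design it is automatically not a $4$-design, but in fact all shells here are $3$-designs, so we genuinely need $k=4$.)

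First I would treat $C=RM(1,m)$. The only nonempty shells are $C_0$, $C_{2^m}$ (trivial, single codeword) and $C_{2^{m-1}}$ (the $2^{m+1}-2$ minimum-weight-complementary vectors), so only $C_{2^{m-1}}$ is at issue. Take $k=4$ and choose a harmonic function $f$ of degree $4$ supported on $\binom{U}{4}$ for some fixed three-dimensional subspace $U\subset V$; Theorem \ref{thm:main1} then gives
\[
w_{C,f}(x,y)=2^{m-3}x^{2^{m-1}}y^{2^{m-1}}\sum_{a\in H_8,\ \wt(a)=4}\widetilde f(a).
\]
The monomial here is exactly $x^{2^m-\ell}y^\ell$ with $\ell=2^{m-1}$, so $C_{2^{m-1}}$ is a $4$-design if and only if $\sum_{a\in H_8,\wt(a)=4}\widetilde f(a)=0$ for \emph{every} such $f$. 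Thus I must produce one harmonic $f$ of degree $4$, supported on a coordinate $8$-set arranged as $U\cong\F_2^3$ on which $H_8$ is the extended Hamming code of length $8$, with $\sum_{a\in H_8,\wt(a)=4}\widetilde f(a)\ne0$. Since $H_8$ has exactly $14$ weight-$4$ codewords, this is a finite, explicit linear-algebra computation inside the $\binom{8}{4}=70$-dimensional space $\Harm_4$ on $8$ points; one simply checks that the linear functional $f\mapsto\sum_{a}\widetilde f(a)$ is not identically zero on $\Harm_4$. For the dual code $C^\perp\cong H_{2^m}$, Theorem \ref{thm:main1} gives $w_{C^\perp,f}$ as an explicit nonzero multiple (when the same $f$ is used) of $(xy)^4\bigl((x+y)/\sqrt2\bigr)^{2^{m-1}-4}\bigl((x-y)/\sqrt2\bigr)^{2^{m-1}-4}$; expanding this polynomial and reading off each coefficient of $x^{2^m-\ell}y^\ell$ shows these do not all vanish, hence some shell of $H_{2^m}$ fails the $4$-design condition as well. (One should note the coefficient of a \emph{specific} $x^{2^m-\ell}y^\ell$ is nonzero for some $\ell$ with $C^\perp_\ell\neq\emptyset$; this follows since the displayed polynomial is not the zero polynomial and its support lies among the weights occurring in $H_{2^m}$.)

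The main obstacle I anticipate is the single concrete verification that the functional $f\mapsto\sum_{a\in H_8,\wt(a)=4}\widetilde f(a)$ is nonzero on $\Harm_4(\F_2^3)$ — i.e. exhibiting an explicit degree-$4$ harmonic function on the $8$-point set with nonzero "$H_8$-average." Everything else is bookkeeping: the reduction to a coefficient condition is the standard Bachoc criterion, and the shapes of $w_{C,f}$ and $w_{C^\perp,f}$ are handed to us by Theorem \ref{thm:main1}. A clean way to settle the obstacle without heavy computation is to observe that if $\sum_{a\in H_8,\wt(a)=4}\widetilde f(a)$ vanished for all $f\in\Harm_k$ with $k\le 4$, then $\mathcal B((H_8)_4)$ would be a $4$-design on $8$ points; but a $4$-$(8,4,\lambda)$ design would need $\lambda\binom{8}{4}/\binom{4}{4}$-type divisibility forcing $\lambda\binom{4}{i}^{-1}\binom{8-i}{4-i}$ to be integral for all $i\le 4$, and the $14$ weight-$4$ blocks of $H_8$ (which form the unique $3$-$(8,4,1)$ design, the AG$(3,2)$ planes) are well known \emph{not} to form a $4$-design — any two of its blocks meet in $0$ or $2$ points, not with the constant pattern a $4$-design would demand. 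This pins down the nonvanishing and completes the argument for both $C$ and $C^\perp$, hence for $RM(1,m)$ and $H_{2^m}$.
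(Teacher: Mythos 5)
Your proposal is correct and follows the same overall strategy as the second of the paper's two proofs: reduce to the nonvanishing of the linear functional $f\mapsto\sum_{a\in H_8,\,\wt(a)=4}\widetilde f(a)$ on degree-$4$ harmonic functions supported on $\binom{U}{4}$, then read off the conclusion from Theorem \ref{thm:main1} together with the Bachoc/Delsarte criterion. Where you diverge is in how the nonvanishing is certified: the paper constructs an explicit witness, namely $f=\sum_{z\in\mathcal B}z-\sum_{z\in\mathcal B^\tau}z$ where $\mathcal B$ is the block set of the $3$-$(8,4,1)$ design carried by $(H_8)_4$ and $\tau$ is a transposition, and computes $\sum_a\widetilde f(a)=|\mathcal B|-|\mathcal B\cap\mathcal B^\tau|=14-6=8\neq 0$; you instead argue non-constructively via the contrapositive of Delsarte's Theorem \ref{thm:design}: since $(H_8)_4$ is a $3$-design but cannot be a $4$-design (the integrality condition $\lambda\binom{8}{4}=14\binom{4}{4}$ forces $\lambda=1/5$), some $f\in\Harm_4$ on the $8$ points of $U$ must have nonzero functional value. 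Both routes are valid; yours avoids any explicit computation at the cost of an existence argument (and you should note, as the paper implicitly does, that a harmonic function on $U$ extended by zero remains harmonic on $V$ and satisfies the support hypothesis of Theorem \ref{thm:main1}). Two smaller remarks: for the dual code you phrase the conclusion as ``some shell fails,'' but the corollary asserts failure for \emph{every} nonempty shell; this does follow, since the support of $(xy)^4(x^2-y^2)^{2^{m-1}-4}$ consists of all even $\ell$ with $4\le\ell\le 2^m-4$, which covers every nontrivial weight of $H_{2^m}$ --- you should state the inclusion in that direction. Finally, the paper also gives an independent first proof via Jacobi polynomials, comparing $J_{C,T_1}-J_{C,T_2}$ for a $T_1$ with $u_1+u_2\neq u_3$ and a $T_2$ with $v_1+v_2=v_3$ using Theorem \ref{thm:Jacobi}; your proposal does not touch that route.
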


This paper is organized as follows. 
In Section~\ref{sec:pre}, 
we define and give some basic properties of 
codes, 
combinatorial $t$-designs, 
Jacobi polynomials, and harmonic weight enumerators 
used in this paper.
In Sections~\ref{sec:Jacobi}, \ref{sec:harm}, and \ref{sec:cor}, 
we prove Theorems~\ref{thm:Jacobi} and \ref{thm:main1} and 
Corollary \ref{cor:main}, respectively.

All computer calculations reported in this paper were done using 
{\sc Magma}~\cite{Magma} and {\sc Mathematica}~\cite{Mathematica}. 

\section{Preliminaries}\label{sec:pre}

\subsection{Codes and combinatorial $t$-designs}

A binary linear code $C$ of length $n$ is a linear subspace of $\FF_{2}^{n}$. 
An inner product $({x},{y})$ on $\FF_2^n$ is given 
by
\[
(x,y)=\sum_{i=1}^nx_iy_i,
\]
where $x,y\in \FF_2^n$ with $x=(x_1,x_2,\ldots, x_n)$ and 
$y=(y_1,y_2,\ldots, y_n)$. 
The dual of a linear code $C$ is defined as 
\[
C^{\perp}=\{{y}\in \FF_{2}^{n}\mid ({x},{y}) =0 \text{ for all }{x}\in C\}. 
\]
For $x \in\FF_2^n$,
the weight $\wt(x)$ is the number of its nonzero components. 

A combinatorial $t$-design 
is a pair 
$\mathcal{D}=(\Omega,\mathcal{B})$, where $\Omega$ is a set of points of 
cardinality $v$, and $\mathcal{B}$ is a collection of $k$-element subsets
of $\Omega$ called blocks, with the property that any $t$ points are 
contained in precisely $\lambda$ blocks.

The support of a vector 
${x}:=(x_{1}, \dots, x_{n})$, 
$x_{i} \in \GF_{2}$ is 
the set of indices of its nonzero coordinates: 
${\rm supp} ({x}) = \{ i \mid x_{i} \neq 0 \}$\index{$supp (x)$}.
Let $\Omega:=\{1,\ldots,n\}$ and 
$\mathcal{B}(C_\ell):=\{\supp({x})\mid {x}\in C_\ell\}$. 
Then for a code $C$ of length $n$, 
we say that the shell $C_\ell$ is a combinatorial $t$-design if 
$(\Omega,\mathcal{B}(C_\ell))$ is a combinatorial $t$-design. 

The following theorem is from Assmus and Mattson \cite{assmus-mattson}. It is one of the 
most important theorems in coding theory and design theory:
\begin{thm}[\cite{assmus-mattson}] \label{thm:assmus-mattson}

Let $C$ be a linear code of 
length $n$ over $\FF_q$ with minimum weight $d$. 
Let $C^\perp$ denote the dual code of $C$, with 
minimum weight $d^\perp$. 
Suppose that an integer $t$ $($$1 \leq t \leq n$$)$ is 
such that there are at most $d-t$ weights of $C^\perp$ 
in $\{1, 2,\ldots , n - t\}$, 
or such that there are at most $d^\perp - t$ weights of $C$ 
in $\{1, 2, \ldots , n-t\}$. 
Then the supports of the words of any fixed weight 
in $C$ form a $t$-design $($with possibly repeated blocks$)$.
\end{thm}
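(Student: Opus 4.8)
The plan is to reduce the conclusion to a statement about shortened codes and then to pin down the weight distribution of each shortened code by a MacWilliams/Krawtchouk argument. Recall first the standard combinatorial fact that a collection $\mathcal{B}$ of $k$-element subsets of $\{1,\dots,n\}$ is a $t$-design if and only if the number of blocks of $\mathcal{B}$ disjoint from a $t$-subset $T$ is the same for every $T$ (this comes from the inclusion--exclusion relating ``blocks through an $i$-set'' to ``blocks disjoint from an $i$-set''). For a $t$-subset $T$ of coordinate positions, let $C_T$ denote the \emph{shortening} of $C$ at $T$: the code of length $n-t$ obtained by keeping the codewords of $C$ that vanish on $T$ and then deleting those $t$ coordinates. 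Then the number of weight-$w$ codewords of $C$ whose support misses $T$ equals the coefficient of $x^{\,n-t-w}y^{w}$ in the weight enumerator $W_{C_T}$, so it suffices to show that $W_{C_T}$ does not depend on $T$; for the conclusion about $C$ under the first hypothesis we will instead show $W_{(C^\perp)_T}$ is independent of $T$ and then transfer the conclusion from $C^\perp$ to $C$.

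Assume first the hypothesis that $C$ has at most $d^\perp-t$ nonzero weights in $\{1,\dots,n-t\}$, and that this number is positive (otherwise $C_T=\{0\}$ for all $T$ and there is nothing to prove). Every nonzero codeword of $C_T$ is a codeword of $C$ of weight at most $n-t$, so the nonzero weights of $C_T$ lie in a \emph{fixed} set $\{w_1<\dots<w_s\}$ with $s\le d^\perp-t$; thus $W_{C_T}(x,y)=x^{\,n-t}+\sum_{j=1}^{s}a_j\,x^{\,n-t-w_j}y^{w_j}$ for some integers $a_j\ge 0$. Positivity of the weight count gives $t<d^\perp$, so $C^\perp$ has no nonzero word supported on the $t$-set $T$; hence $|C_T|=q^{\dim C-t}$ and the dual of $C_T$ --- which is $C^\perp$ punctured on $T$ --- has minimum weight at least $d^\perp-t$, since puncturing $t$ coordinates lowers the minimum weight by at most $t$. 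Applying the MacWilliams identity to $W_{C_T}$, setting to $0$ the coefficients of $x^{\,n-t-i}y^{i}$ in $W_{(C_T)^\perp}$ for $1\le i\le d^\perp-t-1$, and adjoining the normalization $1+\sum_j a_j=q^{\dim C-t}$, gives a system of $d^\perp-t\ge s$ linear equations for $(a_1,\dots,a_s)$ whose coefficients are values $K_i(w_j)$ of Krawtchouk polynomials at the fixed weights $w_j$, hence independent of $T$.

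The crux is that this system has a unique solution. Since the Krawtchouk polynomials $K_0,K_1,\dots,K_{s-1}$ have degrees $0,1,\dots,s-1$ as polynomials in $w$, the $s\times s$ matrix $\bigl(K_i(w_j)\bigr)_{0\le i\le s-1,\ 1\le j\le s}$ arises from the Vandermonde matrix $\bigl(w_j^{\,i}\bigr)$ by an invertible triangular change of basis, and is therefore nonsingular because $w_1,\dots,w_s$ are distinct. As the actual weight enumerator is one solution, it is the only one, and since it is built entirely from $T$-independent data it is independent of $T$; by the reduction above, every shell $C_w$ is a $t$-design. Under the other hypothesis --- $C^\perp$ has at most $d-t$ nonzero weights in $\{1,\dots,n-t\}$ --- the same argument with $C^\perp$ in place of $C$ shows $W_{(C^\perp)_T}$ is independent of $T$, so every shell of $C^\perp$ is a $t$-design; one transfers this to $C$ using the standard fact that a code supports $t$-designs in all its shells if and only if its dual does (which in turn follows by checking that the design data of $C^\perp$ up to level $t$ determines, via inclusion--exclusion, the weight enumerator of every puncturing of $C^\perp$ on a set of size $\le t$, hence --- by MacWilliams duality between such a puncturing and the corresponding shortening of $C$ --- forces $W_{C_T}$ to be independent of $T$). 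I expect the real work to lie in the bookkeeping: matching each hypothesis to the code one shortens, verifying that the number of equations exactly meets the number of unknowns, setting up the transfer lemma cleanly, and disposing of the degenerate cases (when $t$ equals a minimum distance, or the relevant weight set is empty); the nonsingularity itself is immediate once the Krawtchouk--Vandermonde relation is noticed.
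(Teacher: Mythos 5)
First, note that the paper does not prove this statement at all: Theorem~\ref{thm:assmus-mattson} is quoted from Assmus--Mattson \cite{assmus-mattson} as a known result, so your proposal can only be measured against the standard literature proof. Your core mechanism is exactly that standard argument: shorten at a $t$-set $T$, observe that the nonzero weights of $C_T$ lie in a fixed set of size $s\le d^\perp-t$, use the MacWilliams identity to extract $d^\perp-t$ linear equations (the vanishing of the low-order coefficients of the dual's weight enumerator plus the normalization) with $T$-independent Krawtchouk coefficients, and conclude uniqueness from the Krawtchouk--Vandermonde nonsingularity. That part, including the dimension count $|C_T|=q^{\dim C-t}$ and the bound on the minimum weight of the punctured dual, is correct.

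There is, however, a genuine gap in your opening reduction. The claimed equivalence ``$\mathcal{B}$ is a $t$-design if and only if the number of blocks disjoint from each $t$-set is constant'' is \emph{false} when the block size $k$ exceeds $n-t$: take $n=4$, $t=2$, $\mathcal{B}=\{\{1,2,3\},\{1,2,4\}\}$; every $2$-set is disjoint from exactly $0$ blocks, yet $\{1,2\}$ lies in two blocks and $\{3,4\}$ in none. (The averaging argument that recovers the ``disjoint from a $j$-set'' counts for $j<t$ from the $j=t$ case multiplies by $\binom{n-j-k}{t-j}$, which vanishes precisely when $k>n-t$.) Since the theorem asserts the design property for \emph{every} weight --- and for binary codes this includes weights above $n-t$, e.g.\ the all-ones word or complements of low-weight words --- your proof as written does not cover those shells. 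The fix stays inside your framework: the hypothesis propagates from level $t$ to every level $j\le t$, because passing from the window $\{1,\dots,n-t\}$ to $\{1,\dots,n-j\}$ adds at most $t-j$ new weights while the allowance grows from $d^\perp-t$ to $d^\perp-j$; hence the same Krawtchouk argument shows $W_{C_{(S)}}$ is independent of $S$ for \emph{all} $|S|=j\le t$, and then M\"obius inversion over the subsets of a $t$-set $T$ expresses the number of weight-$w$ words whose support contains $T$ in terms of these $T$-independent quantities, for every $w$. The same care is needed in your transfer lemma for the first hypothesis, which you only sketch: ``all shells of $C^\perp$ are $t$-designs implies all shells of $C$ are'' again requires the intersection data with $j$-sets for all $j\le t$, not just $j=t$ (and, incidentally, is most transparently seen from the harmonic-enumerator duality of Theorem~\ref{thm: Bachoc iden.} together with Theorem~\ref{thm:design}). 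Finally, for $q>2$ the ``any fixed weight'' conclusion as quoted requires the additional observation that distinct codewords of the relevant weights with equal supports are scalar multiples; this is automatic only in the binary case used in this paper.
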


\subsection{Jacobi polynomials}

Let $C$ be a binary code of length $n$ and $T\subset [n]:=\{1,\ldots,n\}$. 
Then the Jacobi polynomial of $C$ with $T$ is defined as follows \cite{Ozeki}:
\[
J_{C,T} (w,z,x,y) :=\sum_{c\in C}w^{m_0(c)} z^{m_1(c)}x^{n_0(c)}y^{n_1(c)}, 
\]
where for $c=(c_1,\ldots,c_n)$, 
\begin{align*}
m_i(c)&=|\{j\in T \mid c_j=i \}|,\\
n_i(c)&=|\{j\in [n]\setminus T\mid c_j=i \}|.
\end{align*}

The following is a generalization of the classical MacWilliams identity: 
\begin{thm}[\cite{Ozeki}]\label{thm:Mac-Jacobi}
Let $C$ be a binary code of length $n$ and $T\subset [n]$. 
Then we have 
\[
J_{C^\perp,T}(w,z,x,y) =\frac{1}{|C|}J_{C,T}(w + z,w - z,x + y,x - y).
\]
\end{thm}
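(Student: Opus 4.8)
The plan is to prove the identity by the standard character-theoretic (Poisson summation) argument, refined so as to keep track of the splitting of the coordinates $[n]=T\sqcup([n]\setminus T)$. The starting observation is that the summand defining the Jacobi polynomial factors as a product over the $n$ coordinates. Concretely, I introduce single-coordinate weight functions $f,g\colon\FF_2\to\CC[w,z,x,y]$ by $f(0)=w$, $f(1)=z$, $g(0)=x$, $g(1)=y$, and for $u=(u_1,\dots,u_n)\in\FF_2^n$ set
\[
F(u)=\prod_{j\in T}f(u_j)\prod_{j\notin T}g(u_j).
\]
By the definitions of $m_i$ and $n_i$, the product $F(u)$ equals $w^{m_0(u)}z^{m_1(u)}x^{n_0(u)}y^{n_1(u)}$, so that $J_{C^\perp,T}(w,z,x,y)=\sum_{u\in C^\perp}F(u)$. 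The whole point will be to show that the Hadamard transform of $F$, summed over $C$, reproduces the right-hand side.

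Next I would record the Poisson summation formula for the pair $C,C^\perp$. Define the Hadamard transform $\hat F(c)=\sum_{v\in\FF_2^n}(-1)^{(c,v)}F(v)$. Using the character-orthogonality relation $\sum_{c\in C}(-1)^{(c,v)}=|C|$ when $v\in C^\perp$ and $0$ otherwise (the map $c\mapsto(c,v)$ being a nontrivial homomorphism $C\to\FF_2$ precisely when $v\notin C^\perp$), interchanging the order of summation gives
\[
\sum_{c\in C}\hat F(c)=\sum_{v\in\FF_2^n}F(v)\sum_{c\in C}(-1)^{(c,v)}=|C|\sum_{v\in C^\perp}F(v),
\]
and hence $\sum_{u\in C^\perp}F(u)=\tfrac1{|C|}\sum_{c\in C}\hat F(c)$.

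The key step is then to compute $\hat F$ and observe that it again factors over coordinates. Because $F$ is a product of single-coordinate functions and the character factors as $(-1)^{(c,v)}=\prod_j(-1)^{c_jv_j}$, the transform splits as $\hat F(c)=\prod_{j\in T}\hat f(c_j)\prod_{j\notin T}\hat g(c_j)$, where $\hat f(c_j)=\sum_{t\in\FF_2}(-1)^{c_jt}f(t)$ and likewise for $g$. A one-line computation gives $\hat f(0)=w+z$, $\hat f(1)=w-z$, $\hat g(0)=x+y$, $\hat g(1)=x-y$, so that
\[
\hat F(c)=(w+z)^{m_0(c)}(w-z)^{m_1(c)}(x+y)^{n_0(c)}(x-y)^{n_1(c)}.
\]
This is exactly the summand of $J_{C,T}$ after the substitution $w\mapsto w+z$, $z\mapsto w-z$, $x\mapsto x+y$, $y\mapsto x-y$. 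Summing over $c\in C$ yields $\sum_{c\in C}\hat F(c)=J_{C,T}(w+z,w-z,x+y,x-y)$, and combining this with the Poisson formula of the previous paragraph gives the asserted identity.

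I do not expect a genuine obstacle here; the entire argument is bookkeeping around a single application of Poisson summation, and all the algebraic content reduces to the four elementary transforms $\hat f(0),\hat f(1),\hat g(0),\hat g(1)$. The only point requiring care is the verification that $F$ and its transform both factor coordinate-wise in a manner compatible with the partition $[n]=T\sqcup([n]\setminus T)$, which is precisely what turns the global four-variable substitution $(w,z,x,y)\mapsto(w+z,w-z,x+y,x-y)$ into a product of the two-term transforms $\hat f$ and $\hat g$. Once this multiplicativity is in place, the theorem follows formally.
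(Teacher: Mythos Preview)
Your argument is correct. The paper does not supply its own proof of this theorem; it is quoted as a known result from Ozeki and used as a black box. The Poisson summation / character-orthogonality computation you give is the standard proof of MacWilliams-type identities in this setting, and every step checks out: the coordinatewise factorisation of $F$, the identity $\sum_{c\in C}(-1)^{(c,v)}=|C|\cdot[v\in C^\perp]$, and the four values $\hat f(0)=w+z$, $\hat f(1)=w-z$, $\hat g(0)=x+y$, $\hat g(1)=x-y$ are all exactly right. There is nothing to compare against here, and nothing missing from your write-up.
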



It is easy to see that 
$C_\ell$ is a combinatorial $t$-design 
if and only if 
the coefficient of $z^{t}x^{n-\ell}y^{\ell-t}$ 
in $J_{C,T}$ is independent of the choice of $T$ with $|T|=t$. 

\subsection{Harmonic weight enumerators}\label{sec:Har}


In this subsection, we review the concept of 
harmonic weight enumerators.

Let $\Omega=\{1, 2,\ldots,n\}$ be a finite set (which will be the set of coordinates of the code) and 
let $X$ be the set of its subsets, while, for each $k= 0,1,\dots, n$, 
let $X_{k}$ be the set of its $k$-subsets.
We denote by $\R X$ and $\R X_k$ the 
real vector spaces spanned by the elements of $X$
and $X_{k}$, respectively.
An element of $\R X_k$ is denoted by
$$f=\sum_{z\in X_k}f(z)z$$
and is identified with the real-valued function on $X_{k}$ given by 
$z \mapsto f(z)$. 

An element $f\in \R X_k$ can be extended to an element $\widetilde{f}\in \R X$ by setting, for all $u \in X$,
$$\widetilde{f}(u)=\sum_{z\in X_k, z\subset u}f(z).$$
The differentiation $\gamma$ is the operator on $\RR X$ defined by linearity from 
$$\gamma(z) =\sum_{y\in X_{k-1},y\subset z}y$$
for all $z\in X_k$ and for all $k=0,1, \ldots, n$. 
An element $f\in \R X_k$ satisfying $\gamma(f)=0$
is called a harmonic function of degree $k$,
and we denote by $\Harm_{k}$ the set of all 
harmonic functions of degree $k$:
$$\Harm_k =\ker(\gamma|_{\R X_k}).$$
\begin{thm}[{{\cite[Theorem 7]{Delsarte}}}]\label{thm:design}
A set $\mathcal{B} \subset X_{m}$ $($where $m \leq n$$)$ of blocks is a $t$-design 
if and only if $\sum_{b\in \mathcal{B}}\widetilde{f}(b)=0$ 
for all $f\in \Harm_k$, $1\leq k\leq t$. 
\end{thm}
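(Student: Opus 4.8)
The plan is to recast the statement as an orthogonality condition in the inner-product spaces $\R X_k$, where $\R X_k$ carries the inner product making $X_k$ an orthonormal basis; with this normalization $\la b,\widetilde{f}\ra=\widetilde{f}(b)$ for $b\in X_m$ when $\widetilde{f}$ is viewed inside $\R X_m$. The key operator is the adjoint $\delta\colon\R X_{k-1}\to\R X_k$ of $\gamma|_{\R X_k}$, explicitly $\delta(y)=\sum_{z\in X_k,\,z\supset y}z$. Two consequences of adjointness will be used throughout. First, since $\ker\gamma=(\mathrm{im}\,\delta)^{\perp}$, we get the orthogonal decomposition $\R X_k=\Harm_k\oplus\delta(\R X_{k-1})$, and iterating it downward yields $\R X_t=\sum_{j=0}^{t}\delta^{\,t-j}(\Harm_j)$ with $\delta^{t}(\Harm_0)=\R\allone_t$, where $\allone_t:=\sum_{z\in X_t}z$. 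Second, for $f\in\Harm_j$ with $j\ge1$ one has $\la f,\allone_j\ra=\tfrac1{j!}\la f,\delta^{j}(\emptyset)\ra=\tfrac1{j!}\la\gamma^{j}(f),\emptyset\ra=0$, since $\gamma(f)=0$.

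Next I would encode the design condition. Counting saturated chains gives $\gamma^{\,m-t}(b)=(m-t)!\sum_{z\in X_t,\,z\subset b}z$ for a block $b\in X_m$, so $\gamma^{\,m-t}(\chi_{\mathcal{B}})\in\R X_t$, with $\chi_{\mathcal{B}}:=\sum_{b\in\mathcal{B}}b$, takes at each $T\in X_t$ the value $(m-t)!\,|\{b\in\mathcal{B}\mid T\subset b\}|$. Hence $\mathcal{B}$ is a $t$-design if and only if $\gamma^{\,m-t}(\chi_{\mathcal{B}})\in\R\allone_t$, equivalently $\gamma^{\,m-t}(\chi_{\mathcal{B}})\perp\allone_t^{\perp}$. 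The same chain count gives $\delta^{\,m-j}(f)=(m-j)!\,\widetilde{f}$ in $\R X_m$, so for $f\in\Harm_j$ with $1\le j\le t$,
\begin{align*}
\la\gamma^{\,m-t}(\chi_{\mathcal{B}}),\,\delta^{\,t-j}(f)\ra
&=\la\chi_{\mathcal{B}},\,\delta^{\,m-j}(f)\ra\\
&=(m-j)!\sum_{b\in\mathcal{B}}\widetilde{f}(b).
\end{align*}
Thus the hypothesis ``$\sum_{b\in\mathcal{B}}\widetilde{f}(b)=0$ for all $f\in\Harm_j$, $1\le j\le t$'' is precisely the statement that $\gamma^{\,m-t}(\chi_{\mathcal{B}})\perp W$, where $W:=\sum_{j=1}^{t}\delta^{\,t-j}(\Harm_j)$.

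It remains to show $W=\allone_t^{\perp}$; both implications then follow at once. For $W\subseteq\allone_t^{\perp}$: if $f\in\Harm_j$ with $j\ge1$, then $\la\delta^{\,t-j}(f),\allone_t\ra=\la f,\gamma^{\,t-j}(\allone_t)\ra=0$, because $\gamma(\allone_s)=(n-s+1)\allone_{s-1}$ for every $s$, so $\gamma^{\,t-j}(\allone_t)$ is a positive multiple of $\allone_j$, against which $f$ is orthogonal by the second fact. For the reverse inclusion, combine $W\subseteq\allone_t^{\perp}$ with $\R X_t=\R\allone_t+W$ from the first paragraph: any $v\in\allone_t^{\perp}$ is $v=c\allone_t+w$ with $w\in W\subseteq\allone_t^{\perp}$, forcing $c\allone_t=v-w\in\allone_t^{\perp}$, hence $c=0$ and $v=w\in W$. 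I expect no conceptual obstacle here; the only thing to be careful about is the bookkeeping of the chain-counting constants ($(m-t)!$, $(m-j)!$, $j!$) and checking that the decomposition argument runs uniformly in the degree down to $k=1$. It is worth noting that the structural splitting $\R X_k=\Harm_k\oplus\delta(\R X_{k-1})$ is pure adjointness and needs no deeper input, such as injectivity of $\delta$ or rank formulas for inclusion matrices.
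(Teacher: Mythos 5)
Your argument is correct. Note first that the paper offers no proof of this statement at all: it is quoted verbatim from Delsarte's work as an external citation, so there is no in-paper argument to compare against. Your proof is essentially the standard (and essentially Delsarte's original) one: exhibit $\delta$ as the adjoint of $\gamma$ for the inner product making the $k$-subsets orthonormal, deduce $\R X_k=\Harm_k\oplus\delta(\R X_{k-1})$ and hence $\R X_t=\sum_{j=0}^{t}\delta^{t-j}(\Harm_j)$, translate the design condition into $\gamma^{m-t}(\chi_{\mathcal{B}})\in\R\allone_t$, and move everything across the adjunction. All the individual steps check out: the chain-counting constants $(m-t)!$, $(m-j)!$, $j!$ are right; the identity $\la\gamma^{m-t}(\chi_{\mathcal{B}}),\delta^{t-j}(f)\ra=(m-j)!\sum_{b\in\mathcal{B}}\widetilde{f}(b)$ is correct; and the identification $W=\allone_t^{\perp}$ is sound, the only implicit hypothesis being $\allone_t\neq 0$, which holds since $t\leq m\leq n$. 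You are also right that the splitting $\R X_k=\Harm_k\oplus\delta(\R X_{k-1})$ needs only $\ker\gamma=(\mathrm{im}\,\delta)^{\perp}$ and no rank or injectivity facts about inclusion matrices, since you only need the sum decomposition of $\R X_t$, not its directness.
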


In \cite{Bachoc}, the harmonic weight enumerator associated with a binary linear code $C$ was defined as follows. 
\begin{df}
Let $C$ be a binary code of length $n$ and let $f\in\Harm_{k}$. 
The harmonic weight enumerator associated with $C$ and $f$ is
$$w_{C,f}(x,y)=\sum_{{c}\in C}\widetilde{f}({c})x^{n-\wt({c})}y^{\wt({c})},$$
where we write $\widetilde{f}(\supp(c))$ as $\widetilde{f}({c})$ for short.
\end{df}

Bachoc proved the following MacWilliams-type equality. 
\begin{thm}[\cite{Bachoc}] \label{thm: Bachoc iden.} 
Let $C$ be a binary code of length $n$ and $f\in\Harm_{k}$. 
Let 
$w_{C,f}(x,y)$ be 
the harmonic weight enumerator associated with $C$ and $f$. 
Then $$w_{C,f}(x,y)= (xy)^{k} Z_{C,f}(x,y),$$
where $Z_{C,f}$ is a homogeneous polynomial of degree $n-2k$ and satisfies
$$Z_{C^{\bot},f}(x,y)= (-1)^{k} \frac{2^{n/2}}{|C|} Z_{C,f} \left( \frac{x+y}{\sqrt{2}}, \frac{x-y}{\sqrt{2}} \right).$$
\end{thm}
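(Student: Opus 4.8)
The plan is to reduce the identity to the ordinary Poisson summation formula on $\FF_2^n$ after isolating the single genuinely harmonic ingredient. Writing $\widetilde f(c)=\sum_{z\in X_k,\,z\subset\supp(c)}f(z)$ and interchanging the two sums, I would first record
\[
w_{C,f}(x,y)=\sum_{z\in X_k}f(z)\sum_{c\in C}g_z(c),\qquad g_z(u):=[z\subset\supp(u)]\,x^{n-\wt(u)}y^{\wt(u)}.
\]
The purpose of this rewriting is that each $g_z$ factors as a product over the $n$ coordinates, so its Hadamard transform $\hat g_z(v)=\sum_u(-1)^{(u,v)}g_z(u)$ factors coordinatewise. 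A one-line computation per coordinate then gives
\[
\hat g_z(v)=(-1)^{|\supp(v)\cap z|}\,y^k\,(x+y)^{\,n-k-|\supp(v)\setminus z|}(x-y)^{\,|\supp(v)\setminus z|}.
\]
Applying Poisson summation $\sum_{c\in C^\perp}g_z(c)=\tfrac1{|C|}\sum_{c\in C}\hat g_z(c)$ and summing against $f(z)$ reduces everything to evaluating $\sum_z f(z)\hat g_z(c)$ for each fixed $c\in C$.

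The crux — and the step I expect to be the main obstacle — is a purely combinatorial lemma encoding the harmonicity of $f$: for every $S\subset\Omega$ and every $0\le a\le k$,
\[
\sigma_a(S):=\sum_{\substack{z\in X_k\\ |z\cap S|=a}}f(z)=(-1)^{k-a}\binom{k}{a}\,\widetilde f(S).
\]
Strikingly, the proportionality constant depends on neither $|S|$ nor $n$. I would prove this by summing the defining relations $\gamma(f)=0$, that is $\sum_{i\notin A}f(A\cup\{i\})=0$, over all $(k-1)$-subsets $A$ with $|A\cap S|=b$; counting how often each $z$ with $|z\cap S|\in\{b,b+1\}$ is produced yields the recursion $(b+1)\sigma_{b+1}(S)+(k-b)\sigma_b(S)=0$. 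Since $\sigma_k(S)=\widetilde f(S)$ by definition, solving the recursion downward gives the displayed closed form.

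Two special cases of the lemma then dispatch the factorization claim. The case $a=k$ shows $\widetilde f(c)=0$ whenever $\wt(c)<k$, so $y^k\mid w_{C,f}$; the case $a=0$ reads $\widetilde f(\overline c)=(-1)^k\widetilde f(c)$ for the complementary support, whence $\widetilde f(c)=0$ whenever $n-\wt(c)<k$ and $x^k\mid w_{C,f}$. Dividing out $(xy)^k$ defines the homogeneous $Z_{C,f}$ of degree $n-2k$.

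Finally I would assemble the transform. Grouping $\sum_z f(z)\hat g_z(c)$ according to $a=|z\cap\supp(c)|$, substituting the lemma, and using $\sum_{a=0}^k\binom{k}{a}\big(\tfrac{x+y}{x-y}\big)^a=\big(\tfrac{2x}{x-y}\big)^k$ collapses the inner sum to the single term
\[
\sum_z f(z)\hat g_z(c)=(-1)^k2^k\,\widetilde f(c)\,(xy)^k(x+y)^{\,n-k-\wt(c)}(x-y)^{\,\wt(c)-k}.
\]
Summing over $c\in C$, recognizing the result as $(-1)^k2^k(xy)^k$ times $2^{(n-2k)/2}Z_{C,f}\big(\tfrac{x+y}{\sqrt2},\tfrac{x-y}{\sqrt2}\big)$, and cancelling the common factor $(xy)^k$ from $w_{C^\perp,f}=(xy)^kZ_{C^\perp,f}$ produces exactly the stated identity, with the constant $2^k\cdot2^{(n-2k)/2}=2^{n/2}$ matching the claimed $2^{n/2}/|C|$.
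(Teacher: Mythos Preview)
The paper does not supply its own proof of this theorem; it is quoted from Bachoc's paper \cite{Bachoc} as a background result and used without argument. So there is nothing in the paper to compare against.

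That said, your proposal is correct and self-contained. The three ingredients --- the coordinatewise factorization of $\hat g_z$, Poisson summation over $C$, and the harmonic identity $\sigma_a(S)=(-1)^{k-a}\binom{k}{a}\,\widetilde f(S)$ --- combine exactly as you describe, and the binomial collapse $\sum_a\binom{k}{a}(x+y)^a(x-y)^{k-a}=(2x)^k$ produces the stated constant $2^{n/2}/|C|$. One small point worth making explicit in a final write-up: in your displayed formula for $\sum_z f(z)\hat g_z(c)$ the exponents $n-k-\wt(c)$ and $\wt(c)-k$ can be negative, but precisely in those cases your lemma forces $\widetilde f(c)=0$, so the terms vanish and the formal manipulation with the rational expression $(x+y)/(x-y)$ is harmless. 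Otherwise the argument is complete.
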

It follows from Theorem \ref{thm:design} that 
$C_\ell$ is a combinatorial $t$-design if and only if 
the coefficient of $x^{n-\ell}y^\ell$ in $w_{C,f}(x,y)$ vanishes. 
\section{Proof of Theorem \ref{thm:Jacobi}}\label{sec:Jacobi}


In this section, 
we give a proof of Theorem \ref{thm:Jacobi}. 
Let $V=\FF_2^m$ and $C=RM(1,m)$.
For $c=(\lambda(x) + b)_{x\in V}$ and $T\subset V$, we define 
\[
c\vert_T:=(\lambda(x) + b)_{x\in T}. 
\]

\begin{lem}\label{lem:1}
Let 
$T=\{0,u_1,u_2,u_3\}\in\binom{V}{4}$.
\begin{enumerate}
\item [(1)]\label{lem1i1}
If $u_1+u_2\neq u_3$, then
\[
|\{c\in C\setminus\{0,\allone\}\mid \wt(c|_T)=i\}|
=\begin{cases}
2^{m-3}-1&\text{if $i=0,4$,}\\
2^{m-1}&\text{if $i=1,3$,}\\
3\cdot 2^{m-2}&\text{if $i=2$.}
\end{cases}\]
\item [(2)]\label{lem1i2}
If $u_1+u_2= u_3$, then
\[
|\{c\in C\setminus\{0,\allone\}\mid \wt(c|_T)=i\}|
=\begin{cases}
2^{m-2}-1&\text{if $i=0,4$,}\\
0&\text{if $i=1,3$,}\\
3\cdot 2^{m-1}&\text{if $i=2$.}
\end{cases}\]
\end{enumerate}
\end{lem}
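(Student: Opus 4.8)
The plan is to count codewords $c = (\lambda(x)+b)_{x\in V}$ in $C = RM(1,m)$ by how their restriction to $T = \{0,u_1,u_2,u_3\}$ behaves, excluding the two constant words $\0$ and $\allone$. A nonzero, non-all-ones word corresponds to a pair $(\lambda,b)$ with $\lambda\in V^*\setminus\{0\}$ and $b\in\F_2$. Since $\lambda(0)=0$, the value of $c$ at the coordinate indexed by $0\in V$ is exactly $b$; thus the multiset of values $\{c|_T\}$ is determined by $b$ together with the triple $(\lambda(u_1),\lambda(u_2),\lambda(u_3))\in\F_2^3$. So the whole computation reduces to understanding, as $\lambda$ ranges over $V^*\setminus\{0\}$, how often each of the $8$ possible value-triples $(\lambda(u_1),\lambda(u_2),\lambda(u_3))$ occurs, and then summing the two contributions from $b=0$ and $b=1$.

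The key step is the following linear-algebra count. Since $T\in\binom{V}{4}$ the vectors $u_1,u_2,u_3$ are nonzero and pairwise distinct. First I would treat case (1), $u_1+u_2\neq u_3$: then, together with the implicit assumptions $u_1\ne u_2$, $u_i\ne 0$, the three vectors $u_1,u_2,u_3$ are linearly independent (any linear dependence among three distinct nonzero vectors of $\F_2^m$ forces one to be the sum of the other two). Hence the evaluation map $V^*\to\F_2^3$, $\lambda\mapsto(\lambda(u_1),\lambda(u_2),\lambda(u_3))$, is surjective with kernel of dimension $m-3$, so each of the $8$ triples is hit exactly $2^{m-3}$ times — except the zero triple, which among $\lambda\in V^*\setminus\{0\}$ is hit $2^{m-3}-1$ times. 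In case (2), $u_1+u_2=u_3$, the image of the evaluation map is the $2$-dimensional subspace $\{(a,b,a+b)\}$, each of its $4$ elements arising from $2^{m-2}$ functionals $\lambda$ (one fewer, $2^{m-2}-1$, for the zero triple once $\lambda=0$ is removed), and the other $4$ triples never occur; note in this case $u_1,u_2,u_3$ are distinct so $u_1+u_2\neq 0$, which is what keeps this subspace $2$-dimensional.

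Finally I would assemble the weight counts. For a word with constant part $b$ and value-triple $(t_1,t_2,t_3)$, the weight of $c|_T$ is $b + \sum_i(t_i+b) \bmod$-nothing — more precisely it is the number of coordinates in $T$ equal to $1$, namely $[b=1] + \#\{i : t_i+b = 1\}$. Running over $b\in\{0,1\}$ this pairs up each triple $(t_1,t_2,t_3)$ with its complement and with the flip caused by $b$; in case (1) the multiset of weights over all $8$ triples and both values of $b$ gives, by a direct tally, weight $0$ and $4$ once each (before removing $\0,\allone$), weight $1$ and $3$ four times each, weight $2$ six times, and scaling each by the multiplicity $2^{m-3}$ and subtracting the contribution of the excluded words $\0$ (weight $0$) and $\allone$ (weight $4$) yields the stated $2^{m-3}-1, 2^{m-1}, 3\cdot2^{m-2}$. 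Case (2) is the same bookkeeping restricted to the $4$ admissible triples, giving $2^{m-2}-1$, $0$, $3\cdot 2^{m-1}$. The only real subtlety — the ``main obstacle'' — is getting the parity case-analysis of $\#\{i: t_i + b = 1\}$ right and correctly accounting for the removal of the two constant codewords; everything else is the surjectivity/rank computation above, which is routine.
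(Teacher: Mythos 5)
Your proof is correct and follows essentially the same route as the paper: both split $C$ into $V^*$ and its coset $V^*+\allone$ and reduce the count to linear algebra over $\F_2$ applied to the triple $(\lambda(u_1),\lambda(u_2),\lambda(u_3))$. The only cosmetic difference is that you read off all fibre sizes at once from the rank of the evaluation map $V^*\to\F_2^3$, whereas the paper computes each count by inclusion--exclusion on the hyperplanes $u_i^\perp$; the resulting tallies agree.
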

\begin{proof}
By identifying
$(\lambda(x))_{x\in V}$ with $\lambda\in V^*$, 
we have $C=V^*\cup(V^*+\allone)$.
For $i=0,1,\dots,4$, define
\begin{align*}
a_i&=|\{c\in C\setminus\{0,\allone\}\mid \wt(c|_T)=i\}|,\\
b_i&=|\{c\in V^*\mid \wt(c|_T)=i\}|.
\end{align*}
Then
\begin{align*}
b_0&=|\{c\in V^*\mid c|_T=0\}|
\nexteq
|u_1^\perp\cap u_2^\perp\cap u_3^\perp|
\nexteq
\begin{cases}
2^{m-3}&\text{ if $u_1+u_2\neq u_3$,}\\
2^{m-2}&\text{otherwise,}
\end{cases}
\displaybreak[0]\\
b_1&=|\{c\in V^*\mid \wt(c|_T)=1\}|
\nexteq
3|\{c\in V^*\mid c(u_1)=1,\;c(u_2)=c(u_3)=0\}|
\nexteq
3|u_2^\perp\cap u_3^\perp\setminus u_1^\perp|
\nexteq
\begin{cases}
3(|u_2^\perp\cap u_3^\perp|-|u_2^\perp\cap u_3^\perp\cap u_1^\perp|)
&\text{if $u_1+u_2\neq u_3$,}\\
0&\text{otherwise,}
\end{cases}
\nexteq
\begin{cases}
3\cdot2^{m-3}
&\text{if $u_1+u_2\neq u_3$,}\\
0&\text{otherwise,}
\end{cases}
\displaybreak[0]\\
b_2&=|\{c\in V^*\mid \wt(c|_T)=2\}|
\nexteq
3|\{c\in V^*\mid c(u_1)=c(u_2)=1,\;c(u_3)=0\}|
\nexteq
3|u_3^\perp\setminus (u_1^\perp\cup u_2^\perp)|
\nexteq
3(|u_3^\perp|-|u_3^\perp\cap u_1^\perp|-|u_3^\perp\cap u_2^\perp|+
|u_3^\perp\cap u_1^\perp\cap u_2^\perp|)
\nexteq
\begin{cases}
3\cdot2^{m-3}&\text{if $u_1+u_2\neq u_3$,}\\
3\cdot2^{m-2}&\text{otherwise,}
\end{cases}
\displaybreak[0]\\
b_4&=0,\\
b_3&=|V^*|-b_0-b_1-b_2-b_4
\nexteq
2^m-b_0-b_1-b_2
\nexteq
\begin{cases}
2^m-7\cdot2^{m-3}&\text{if $u_1+u_2\neq u_3$,}\\
2^m-4\cdot2^{m-2}&\text{otherwise,}
\end{cases}
\nexteq
\begin{cases}
2^{m-3}&\text{if $u_1+u_2\neq u_3$,}\\
0&\text{otherwise.}
\end{cases}
\end{align*}
Since
\begin{align*}
a_i&=|\{c\in (V^*\setminus\{0\})\cup((V^*+\allone)\setminus\{\allone\})
\mid \wt(c|_T)=i\}|
\nexteq
|\{c\in V^*\setminus\{0\}\mid \wt(c|_T)=i\}|+
|\{c\in (V^*+\allone)\setminus\{\allone\}\mid \wt(c|_T)=i\}|
\nexteq
b_i-\delta_{i,0}
+|\{c\in V^*\setminus\{0\}\mid \wt(c|_T)=4-i\}|
\nexteq
b_i-\delta_{i,0}+b_{4-i}-\delta_{i,4},
\end{align*}
we obtain the desired results.
\end{proof}

\begin{proof}[Proof of Theorem \ref{thm:Jacobi}]
Part (\ref{thm:1.1-1}) follows from
Lemma~\ref{lem:1}, by noticing that
$\wt(c)=2^{m-1}$ for all $c\in C\setminus\{0,\allone\}$.
Part (\ref{thm:1.1-2}) follows from part (\ref{thm:1.1-1})
and Theorem~\ref{thm:Mac-Jacobi}.
\end{proof}



\section{Proof of Theorem \ref{thm:main1}}\label{sec:harm}

In this section, 
we prove Theorem \ref{thm:main1}.
Let $V=\FF_2^m$. 

\begin{lem}\label{lem:numwords}
Let $C=RM(1,m)$ and 
$U\subset V$ be a three-dimensional subspace of $V$. 
Then $\{c\vert_U\mid c\in C\}=H_8$, 
and for each 
$a\in H_8$, we have
\[|\{c\in C_{2^{m-1}}\mid c|_U=a\}|=
2^{m-3}. \]
\end{lem}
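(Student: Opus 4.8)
The plan is to identify $C|_U := \{c|_U \mid c\in C\}$ explicitly and then count the fibers. First I would fix a basis of $U$ and use it to coordinatize $U$ as $\F_2^3$. For $c=(\lambda(x)+b)_{x\in V}$ with $\lambda\in V^*$ and $b\in\F_2$, the restriction $c|_U$ is $(\lambda|_U(y)+b)_{y\in U}$, where $\lambda|_U\in U^*$ is the restriction of $\lambda$ to the subspace $U$. Since every linear functional on $U$ extends to one on $V$, the restriction map $V^*\to U^*$ is surjective; hence $\{c|_U\mid c\in C\}=\{(\mu(y)+b)_{y\in U}\mid \mu\in U^*,\ b\in\F_2\}=RM(1,3)$. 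It is standard (see the remark after the definition of $RM(1,m)$, together with the identification $RM(1,3)^\perp\cong H_8$ and the self-duality-type fact that $RM(1,3)$ is itself the extended Hamming code $H_8$ of length $8$) that $RM(1,3)=H_8$; this gives the first assertion $\{c|_U\mid c\in C\}=H_8$.

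Next I would count $|\{c\in C_{2^{m-1}}\mid c|_U=a\}|$ for a fixed $a\in H_8$. Note $C_{2^{m-1}}=C\setminus\{0,\allone\}$, since $\wt(c)=2^{m-1}$ for every $c\in C\setminus\{0,\allone\}$ and $\wt(0)=0$, $\wt(\allone)=2^m$. So I need to count nonconstant codewords $c\in C$ with $c|_U=a$, which I would split according to whether $c\in V^*$ or $c\in V^*+\allone$ under the identification $C=V^*\cup(V^*+\allone)$. For the piece in $V^*$: writing $a=(\mu(y)+\epsilon)_{y\in U}$ for the unique pair $(\mu,\epsilon)\in U^*\times\F_2$ representing $a$ as an affine function (uniqueness because $|RM(1,3)|=16=2\cdot|U^*|$), the codeword $c=(\lambda(x))_{x\in V}\in V^*$ has $c|_U=a$ iff $\lambda|_U=\mu$ and $\epsilon=0$; the fiber of the surjection $V^*\to U^*$ has size $|V^*|/|U^*|=2^m/2^3=2^{m-3}$, so this contributes $2^{m-3}$ when $\epsilon=0$ and $0$ otherwise (but we must then exclude $c=0$, which occurs iff also $\mu=0$). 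Symmetrically, $c\in V^*+\allone$ gives $c=(\lambda(x)+1)_{x\in V}$ with $c|_U=a$ iff $\lambda|_U=\mu$ and $\epsilon=1$, contributing $2^{m-3}$ when $\epsilon=1$ and $0$ otherwise (excluding $c=\allone$ iff $\mu=0$). Adding the two pieces, for every $a\in H_8$ the total count equals $2^{m-3}$, with the single correction that when $a$ is a constant vector ($\mu=0$) the excluded word ($0$ if $\epsilon=0$, $\allone$ if $\epsilon=1$) is not in $C_{2^{m-1}}$ anyway, and the other constant vector is hit $2^{m-3}$ times by the opposite affine class with $\mu=0$; one checks that in all cases the answer is $2^{m-3}$.

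The main obstacle, and the point I would be careful about, is the bookkeeping for the constant vectors $a\in\{0,\allone\}\subset H_8$: there the naive "$2^{m-3}$ from each of the two affine classes, total $2\cdot2^{m-3}$" double-counts, so one has to track exactly which of $0,\allone\in C$ get removed when passing to $C_{2^{m-1}}$. Since $0|_U=0$ and $\allone|_U=\allone$ and these are the only two codewords removed, for $a=0$ the class $c\in V^*$ with $\lambda|_U=0$ contributes $2^{m-3}$ words of which one is $0$ (removed), while the class $c\in V^*+\allone$ with $\lambda|_U=0$ contributes no word restricting to $0$; so the count is $2^{m-3}-1$? — this would force me instead to interpret the statement with $C_{2^{m-1}}$ meaning words of weight exactly $2^{m-1}$, in which case the claim is literally correct only for the nonconstant $a$, and for constant $a$ one gets $2^{m-3}-1$. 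I expect the intended reading (consistent with how the lemma is used for harmonic enumerators, where only $\wt(a)=4$ terms matter) is that $a$ ranges over weight-$4$ words of $H_8$, for which no exclusion occurs and the clean answer $2^{m-3}$ holds; I would state the lemma accordingly and note that the constant-vector cases are irrelevant for the application. The rest is the routine fiber-size computation $|V^*|/|U^*|=2^{m-3}$ together with the surjectivity of restriction.
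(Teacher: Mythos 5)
Your approach is essentially the paper's: both identify $\{c|_U \mid c\in C\}$ with $RM(1,3)=H_8$ via surjectivity of the restriction map $V^*\to U^*$, and both count the fiber over a fixed $a$ as (a coset of) the annihilator of $U$ in $V^*$, of size $2^m/2^3=2^{m-3}$. The paper's proof is shorter: it reduces to the case where $a$ vanishes at the coordinate $0\in V$ (which forces $b=0$, since $c_0=b$) and identifies the fiber with $\{\lambda\in V^*\mid \kernel\lambda\supset U\}$, whereas you track the two affine classes $V^*$ and $V^*+\allone$ separately; the two bookkeepings are equivalent. Your observation about the constant vectors is correct and is in fact a point the paper's proof silently passes over: for $a=0$ (resp.\ $a=\allone|_U$) the relevant coset of the annihilator contains $\lambda=0$, whose associated codeword is $0$ (resp.\ $\allone$) and hence lies outside $C_{2^{m-1}}$, so the count there is $2^{m-3}-1$ rather than $2^{m-3}$. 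As you note, this discrepancy is harmless downstream, since the proof of Theorem~\ref{thm:main1} invokes the lemma only for $a\in H_8$ of weight $4$, where no exclusion occurs and the stated value $2^{m-3}$ is exact.
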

\begin{proof}
The first statement is immediate since $RM(1,3)=H_8$. 
As for the second statement, we may assume $a$ has entry $0$
at the coordinate $0\in V$. Then the number of $c\in C_{2^{m-1}}$
with $c|_U=a$ is the same as 
the number of $\lambda\in V^\ast$ satisfying 
$\kernel\lambda\supset U$, which is $2^{m-3}$.
\end{proof}


\begin{proof}[Proof of Theorem \ref{thm:main1}]
Since $f$ is a harmonic function of degree $k\geq1$,
we have $\widetilde{f}(\emptyset)=0$. 
Since $\gamma (f)=0$ and 
\begin{align*}
\sum_{y\in\binom{U}{k-1}}(\gamma (f))(y)
=k\widetilde{f}(U),
\end{align*}
we have
$\widetilde{f}(U)=0$.
Note that
$\{c\vert_U\mid c\in C\}=H_8$ has codewords of weight $0,4,8$ only.
This implies that,
for $c\in C$,
$\widetilde{f}(c)=\widetilde{f}(c|_{U})$
is nonzero only if $\wt(c|_U)=4$, or equivalently,
$\wt(c)=2^{m-1}$.
Then 
\begin{align*}
w_{C,f}(x,y)&=
\sum_{c\in C,\;\wt(c)=2^{m-1}}
\widetilde{f}(c\vert_U)x^{2^m-\wt(c)}y^{\wt(c)}
\nexteq
x^{2^{m-1}}y^{2^{m-1}}\sum_{a\in H_8, \wt(a)=4}|\{c\in C_{2^{m-1}}
 \mid c\vert_U=a\}|\widetilde{f}(a) 
\nexteq
2^{m-3}x^{2^{m-1}}y^{2^{m-1}}\sum_{a\in H_8, \wt(a)=4}\widetilde{f}(a)
\end{align*}
by Lemma \ref{lem:numwords}. 
Then we obtain (1). 
Using Theorem \ref{thm: Bachoc iden.}, we obtain (2). 
\end{proof}
\section{Proof of Corollary \ref{cor:main}}\label{sec:cor}

In this section, 
we give a proof of Corollary \ref{cor:main}. 

\begin{proof}[Proof of Corollary \ref{cor:main}]
Let $C=RM(1,m)$. 
We give two proofs. 

The first proof relies on properties of Jacobi polynomials. 
Let 
$T_1=\{0,u_1,u_2,u_3\}\in\binom{V}{4}$ and 
$T_2=\{0,v_1,v_2,v_3\}\in\binom{V}{4}$.
We assume that 
$u_1+u_2\neq u_3$ and 
$v_1+v_2=v_3$. 
By Theorem \ref{thm:Jacobi}, 
\[
J_{C,T_1}-J_{C,T_2}=-2^{m-3} x^{2^{m-1}-4} y^{2^{m-1}-4} 
(w y - x z)^4. 
\]
The coefficient of $z^{4} x^{2^m-\ell} y^{\ell-4}$ in 
$J_{C,T_1}-J_{C,T_2}$ is non-zero whenever $C_{\ell}$ is non-empty. 
Hence, $C_{\ell}$ is not a $4$-design. 

By using Theorem \ref{thm:Mac-Jacobi}, we have
\[
J_{C^\perp,T_1}-J_{C^\perp,T_2}=-(x^2-y^2)^{2^{m-1}-4}(w y - x z)^4. 
\]
The coefficient of $z^{4} x^{2^m-\ell} y^{\ell-4}$ in 
$J_{C^\perp,T_1}-J_{C^\perp,T_2}$ is non-zero 
whenever $(C^\perp)_{\ell}$ is non-empty. 
Hence, $(C^\perp)_{\ell}$ is not a $4$-design.




The second proof relies on properties of harmonic weight enumerators. 
Let $U\subset V$ be a three-dimensional subspace of $V$. 
Then 
$\{c|_U\mid c\in C\}=H_8$ by Lemma~\ref{lem:numwords}.
Let $\mathcal{B}=\{\supp(z)\mid z\in H_8,\; \wt(z)=4\}$, and 
let $\tau$ be a transposition of two coordinates of $H_8$.
Then $\mathcal{B}$ is the set of $14$ blocks of a $3$-$(8,4,1)$ design, 
and $|\mathcal{B}\cap\mathcal{B}^\tau|=6$.
Define 
\[f=\sum_{z\in\mathcal{B}} z-\sum_{z\in\mathcal{B^\tau}} z.\]
Then $f\in\Harm_4$, and
\begin{align*}
\sum_{\substack{a\in H_8\\ \wt(a)=4}} f(a)
&=|\mathcal{B}|-|\mathcal{B}\cap\mathcal{B}^\tau|=
8.
\end{align*}
By Theorem \ref{thm:main1} we have 
\begin{align*}
w_{C,f}(x,y)&=8\cdot 2^{m-3}x^{2^{m-1}}y^{2^{m-1}},\\
w_{C^\perp,f}(x,y)&=8
(xy)^4(x^2-y^2)^{2^{m-1}-4}, 
\end{align*}
and the coefficients of $x^{2^m-\ell}y^\ell$ in $w_{C,f}$ 
(resp.\ $w_{C^\perp,f}$) do not vanish 
whenever
$C_{\ell}$ (resp.\ $(C^\perp)_\ell$) is non-empty. 
By Theorem \ref{thm:design}, 
the proof is complete. 
\end{proof}

\section*{Acknowledgments}
The authors are supported by JSPS KAKENHI (20K03527, 22K03277).

\section*{Statements and Declarations}
Competing Interests. The authors have no affiliation with any organization 
with a direct or indirect financial interest in the subject 
matter discussed in the manuscript. 


\end{document}